\newtheorem{thm}{Theorem}[section]
\newtheorem{prop}[thm]{Proposition}
\newtheorem{rem}[thm]{Remark}
\numberwithin{equation}{section}
\def\bC{{\mathbb C}}
\def\bG{{\mathbb G}}
\def\bH{{\mathbb H}}
\def\bN{{\mathbb N}}
\def\bP{{\mathbb P}}
\def\bQ{{\mathbb Q}}
\def\bR{{\mathbb R}}
\def\bZ{{\mathbb Z}}
\def\GL{{\rm GL}}
\def\PSL{{\rm PSL}}
\def\SL{{\rm SL}}
\def\Cstar{{\mathcal{C}^*}}
\title[Higher-weight limiting modular symbols]{Higher-weight limiting modular symbols}
\author{Jane Panangaden}
\date{2023}
\address{Mathematics Department, Pitzer College, Claremont \\ USA}
\email{panangaden@pitzer.edu}
\begin{document}
\begin{abstract} We begin with the higher-weight modular symbols introduced by Shokurov, which generalize Manin's weight-$2$ modular symbols. We then define higher-weight limiting modular symbols associated to vertical geodesics with one endpoint at an irrational real number, by means of a limiting procedure on Shokurov's modular symbols. These are analogous to the Manin-Marcolli limiting modular symbols for the weight-$2$ case, which are given by a similar limiting procedure on the Manin modular symbols. We show that the limit defining the higher-weight limiting modular symbol is equivalent everywhere to a limit given by approximating the irrational endpoint by its continued fraction expansion. This is done by means of shifting to a coding space, as in the approach of Kesseb\"ohmer and Stratmann in the weight-$2$ case. 

\end{abstract}

\maketitle

\section{Introduction}

There is a rich interplay between number theory and quantum statistical mechanics, beginning with the construction of the Bost-Connes system \cite{BC}. This system is a $\Cstar$-dynamical system that has a geometric interpretation as a coordinate algebra of $1$-dimensional $\bQ$-lattices up to a commensurability equivalence relation. It is related to the explicit class field theory of $\bQ$ in the following sense. The system has ground states ($0$-temperature equilibrium states) which are parameterized by $\GL_1(\hat{\bZ})$. When evaluated on points in a certain  arithmetic subalgebra, the ground states yield a set of algebraic numbers which generate the maximal abelian extension $\bQ^{cycl}$. This picture was later extended by the Connes-Marcolli $\GL_2$-system \cite{CoMa2}, which has an analogous geometric interpretation in terms of $2$-dimensional $\bQ$-lattices. This system also has ground states, which are parameterized by the invertible $2$-d $\bQ$-lattices, or equivalently by $\SL_2(\bZ) \backslash (\GL_2(\hat{\bZ}) \times \bH)$. A certain arithmetic subalgebra was also constructed. The $\GL_2$ system is related to the explicit class field theory of the imaginary quadratic extensions in the following way. The ground state that corresponds to an invertible lattice given by $(\rho, \tau) \in  \GL_2(\hat{\bZ}) \times \bH$, when evaluated on points in the arithmetic subalgebra, generates the specialization at $\tau$ of the modular field. 
\\
\\
Manin's real multiplication program (\cite{Man}, \cite{Man2}) suggests an approach of viewing $\bR$ as an ``invisible" boundary of $\bH$, where points in $\bR$ are viewed as noncommutative degenerations of complex tori. In joint work with Marcolli \cite{MarPan}, we use this idea to construct a boundary version of the $\GL_2$-system, where the boundary $\bP^1(\bR)$ is incorporated directly with the action of a shift operator which implements the shift on the continued fraction expansion. We obtain a family of ground states for the boundary $\GL_2$-system parameterized by 
\[ \GL_2(\bZ) \backslash (\GL_2(\hat{\bZ}) \times  \mathcal{\tilde{P}} ) \times \mathcal{D}_{[0,1] \cap \bQ}  \]
where $ \mathcal{\tilde{P}}$ is a discrete space built from cosets $\bP_{\alpha} = \Gamma \alpha \Gamma / G$, $\Gamma$ is a subgroup of $\GL_2(\bQ)$ and $G$ is a finite index subgroup of $\Gamma$, and where $\mathcal{D}_{[0,1] \cap \bQ}$ is a disconnection algebra in the sense of \cite{Spi}.  We also construct a certain arithmetic subalgebra associated to the boundary-$\GL_2$ system. The evaluation of the ground states on elements of the arithmetic algebra yields pairings of weight-$2$ cusp forms with the limiting modular symbols introduced by Manin and Mariolli in \cite{ManMar}. 
\\
\\
The modular symbol was introduced by Manin in \cite{Man-modsymb}. For a modular group $G$, a modular symbol $\{ \alpha, \beta \}_G$ associated to a pair of cusps $\alpha, \beta \in \bP^1(\bQ)$ is an element of the homology group $H_1(X_G, \bR)$ where $X_G$ is the modular curve. There is a perfect pairing between modular symbols and $S_2(G)$, the space of weight-$2$ cusp forms. The limiting modular symbols of \cite{ManMar} were introduced in order to extend the picture of modular symbols to the invisible boundary of modular curves. For a cusp $\beta \in \bP^1(\bR) \setminus \bP^1(\bQ)$, the limiting modular symbol $\{ \{ \star, \beta \} \}_G$ is also an element of the homology group $H_1(X_G, \bR)$, obtained from the modular symbols via a limiting procedure. The limiting modular symbols are known to exist almost everywhere. They can be expressed in terms of continued fraction expansions and in particular they are non-vanishing at real quadratic points, which have periodic continued fraction expansion. 
\\
\\
In \cite{Shok}, Shokurov introduced modular symbols of weight $w+2$. The starting point for this construction is to take the Kuga modular variety $B_G^w$, which is the Kuga variety of the elliptic surface $B_G$ over the modular curve $\overline{X}_G$ with projection 
\[ \Phi : B_G \rightarrow \overline{X}_G.\]
 The weight-$w+2$ modular symbol $\{\alpha, \beta, N,M\}_G$ where $\alpha, \beta \in \bQ \cup \{ i \infty \}$  and $N,M \in \bZ^w $ is an element of the homology group
\[ H_1(\overline{X}_G, \Pi, (\bG \otimes_{\bQ} \bQ)^w ) \]
where $\Pi$ is the set of cusps, $\bG$ is the homological invariant of $B_G$, and $(\bG \otimes_{\bQ} \bQ)^w$ is the symmetric tensor power. Shokurov showed in the same paper that there is a non-degenerate pairing between these weight $w+2$ modular symbols and $S_{w+2}(G) \oplus \overline{S_{w+2}(G)}$ where $S_{w+2}(G)$ is the space of weight $w+2$ cusp forms on $X_G$. 
\\
\\
It is expected that both the limiting modular symbols and the $\GL_2$ and boundary-$\GL_2$-systems will generalize to a higher-weight setting. Then, the evaluations of the ground states of these higher-weight systems on their corresponding arithmetic subalgebras should give a pairing of the higher-weight limiting modular symbols with higher weight cusp forms, and it is expected that the relations between periods of Hecke eigencuspforms of \cite{Man-Hecke} will arise. The purpose of this note is to present a preliminary result in this direction, namely that the higher-weight limiting modular symbols can be defined via a limiting procedure on Shokurov's higher-weight modular symbols. 
\\
\\
More precisely, we first move to a coding space $\Sigma_G$ which describes geodesics in the hyperbolic half plane by a sequence of ``type changes" as they pass through tiles of the Farrey tessellation. It can be shown that this coding space is isomorphic to a set of sequences $( (x_k,e_k) )_k$ with $x_k  \in  \bZ^{\times}$ and $e_k \in E_G $
where $E_G$ is a fixed set of representative elements of the cosets $G \backslash \PSL_2(\bZ)$ and satisfying the further constraints that for all $k$, $x_k x_{k+1} < 0$ and $e_{k+1} = \tau_{x_k} e_k$ where 
\[ \tau_{x_k} (e_k) \equiv_G e_k ST^{x_k} \]
and 
\[ S = \begin{pmatrix} 0 & -1 \\ 1 & 0 \end{pmatrix} \;\;\;\; \textrm{and} \;\;\;\; T = \begin{pmatrix} 1 & 1 \\ 0 & 1 \end{pmatrix} \]
generate $\PSL_2(\bZ)$. 
\\
\\
Kesseb\"ohmer and Stratmann have shown in \cite{KS} that in the coding space, the weight-$2$ limiting modular symbol corresponding to the geodesic coded by $((x_k, e_k))_k$ is
\[  \lim_{t \rightarrow \infty} \frac{1}{t} \{i, e_1(x + i \exp(-t))\}_G \]
where $x$ is given by the continued fraction expansion  $-\textrm{sign}(x_1) [|x_1|, |x_2|, ... ]$. Furthermore, they have shown that this is everywhere equal to the limit
\[ \lim_{n \rightarrow \infty} \frac{1}{2 \log q_n(|x|)} \sum_{k=1}^{n} \{ e_k (i\infty), e_k(0) \}_G \]
that arises from approximating the endpoint of the geodesic by its continued fraction expansion. (Recall that $\frac{p_k(x)}{q_k(x)}$ is the $k^{th}$ continued fraction approximant of $x$.)
\\
\\
Our main result (Theorem \ref{mainthm}) is the analogous statement, but for the higher-weight modular symbols. We first move to the coding space and then define the higher-weight limiting modular symbol for $((x_k,e_k))_k$ by
\[ \lim_{t \rightarrow \infty} \frac{1}{t} \{i, e_1(x + i \exp(-t)), N, M\}_G \]
and show that this is equal to the limit 
\[  \lim_{n \rightarrow \infty} \frac{1}{2 \log q_n(|x|)} \sum_{k=1}^{n} \{ e_k (i\infty), e_k(0), \tilde{g}_{k-1}^{-1}(x) \cdot (N,M) \}_G \]
where, as before, $x$  has continued fraction expansion $-\textrm{sign}(x_1) [|x_1|, |x_2|, ... ]$, $\tilde{g}_{k-1}(x) = e_1 \overline{g_{k-1}}(x) e_k^{-1}$, and 
\[ g_k(x) = \begin{pmatrix}p_{k-1}(x) & p_k(x) \\ q_{k-1}(x) & q_k(x) \end{pmatrix} \] 
acts by linear fractional transformation.
\\
\\
This is a preliminary result. There is much more work to be done in developing a higher-weight version of the boundary-$\GL_2$-system, describing its ground states, and studying the evaluations of the ground states on points in an appropriate arithmetic algebra. These evaluations are expected to yield some relations involving the higher weight limiting modular symbols and higher weight cusp forms.

\section{Modular symbols} 

\subsection{Weight-2 modular symbols} 

We begin by reviewing the definition and important properties of the modular symbols of weight $2$ introduced in \cite{Man-modsymb}. We fix some modular curve $X_G$ for a modular group $G$. The modular symbol \index{modular symbol} associated to points $\alpha$, $\beta$ in $P^1(\bQ)$ is a real homology class in $H_1(X_G,\bR)$, constructed as follows. Consider $C_{\alpha, \beta}$ the oriented geodesic going from $\alpha$ to $\beta$ in $\bH$. Let $\varphi : \bH \cup P^1(\bQ) \rightarrow X_G$ be the quotient map. Because $\alpha$ and $\beta$ are cusps, the image $\varphi(C_{\alpha, \beta})$ is closed on $X_G$. We defined the modular symbol $\{\alpha, \beta\}_G$ by 
\[\int_{ \{\alpha, \beta\}_G} \omega := \int_ {\alpha}^{\beta} \varphi^*(\omega) = \int_{\varphi(C_{\alpha,\beta})}\omega\]
for $\omega$ a differential form on $X_G$. 
\\
\\
The modular symbols are related to the weight-$2$ cusp forms as follows. Let $S_2(G)$ be the space of weight-$2$ cusp forms and fix $f \in S_2(G)$. The function $f$ doesn't descend to a function on $X_G$ because it isn't $G$-invariant, but the one-form $fdz$ does. We have the invariance, 
\[ f(\gamma \cdot z) d(\gamma \cdot z) =  f\left(\frac{az+b}{cz+d}\right) d\left(\frac{az+b}{cz+d}\right) = (cz+d)^2 f(z) \frac{ac-bd}{(cz+d)^2}dz = f(z)dz \]
for $\gamma = \begin{pmatrix} a&b \\c&d\end{pmatrix}\in G \subset \SL_2(\bZ)$. We then obtain a pairing 
\[ \langle, \rangle : S_2(G) \times H_1(X_G, \bZ) \rightarrow \bC\]
 by integrating along the image in $X_G$ of the geodesic in $\bH$ connecting $\alpha$ and $\beta$ 
\[ \langle f, \{ \alpha, \beta \}_G \rangle = \int_{\alpha}^{\beta} f(z)dz.  \]
We extend the pairing to a pairing $\langle, \rangle : S_2(G) \times H_1(X_G, \bR) \rightarrow \bC$ by linearity. This pairing is perfect and it identifies the dual $S_2(G)^*$ with $H_1(X_G, \bZ)$. 
\\
\\
The modular symbols have several basic properties, which all follow easily from the definition:
\begin{enumerate}
\item $\{\alpha, \beta\}_G = - \{\beta, \alpha\}_G$
\item $\{\alpha, \beta\}_G = \{\alpha, \gamma\}_G + \{\gamma, \beta\}_G$
\item $ \{g\alpha, g\beta\}_G = \{\alpha, \beta\}_G$ for all $g\in G$ 
\end{enumerate} 
Because of the second property, it suffices to consider modular symbols of the form $\{0, \alpha\}_G$. We may also decompose modular symbols of this form using the continued fraction expansion of $\alpha$. Let $\alpha = [a_0; a_1, a_2, a_3...]$ be the continued fraction expansion of $\alpha$ and $\frac{p_k(\alpha)}{q_k(\alpha)}$ be the $k^{th}$ continued fraction approximant of $\alpha$ and observe that the matrix 
\[ g_k(\alpha)= \begin{pmatrix} p_k(\alpha) & p_{k-1}(\alpha) \\ q_k(\alpha) & q_{k-1}(\alpha) \end{pmatrix} \]
is in $\GL_2(\bZ)$ as a consequence of the recurrence relations 
\begin{align*}
p_k &= a_k p_{k-1} + p_{k-2} \\
q_k &= a_k q_{k-1} + q_{k-2} 
\end{align*}
where $p_0=0$ and $q_0=1$. 
\\
\\
For $\alpha = [a_1,a_2,..., a_n]$ rational, we can write the modular symbol as a finite sum:
\[ \{0, \alpha \}_G = \sum_{k=1}^n \left\{\frac{p_{k-1}}{q_{k-1}}, \frac{p_k}{q_k} \right\}_G = \sum_{k=1}^n \left\{g_k(0), g_k(i\infty)\right\}_G.\]
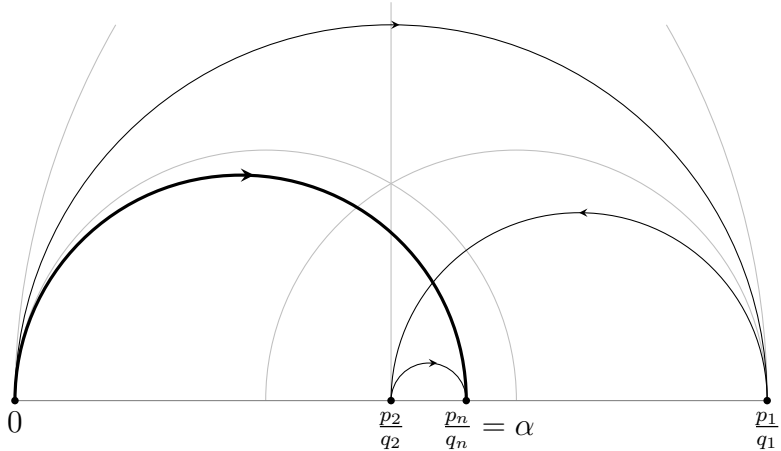
\begin{figure}[H]
\centering
\begin{tikzpicture}[scale=10,>=stealth]
\draw [color=lightgray](1,0) arc (0:30:1cm) ;
\draw [color=lightgray](2/3,0) arc (0:180:1/3) ;
\draw [color=lightgray](1,0) arc (0:180:1/3) ;
\draw [color=lightgray](0,0) arc (0:-30:-1cm) ;
\draw [color=lightgray](1/2,0) -- (1/2,0.53);
\draw [color=gray](0,0) -- (1,0);

\node at (0,0) [circle,fill,inner sep=1pt]{};
\node at (0,0) [below]{$0$};
\node at (1,0) [circle,fill,inner sep=1pt]{};
\node at (1,0) [below]{$\frac{p_1}{q_1}$};
\node at (1/2,0) [circle,fill,inner sep=1pt]{};
\node at (1/2,0) [below]{$\frac{p_2}{q_2}$};
\node at (6/10,0) [circle,fill,inner sep=1pt]{};
\node at (6/10,0) [below]{$\;\; \;\;\; \frac{p_n}{q_n}=\alpha$};

\draw[decoration={
    markings,
    mark=at position 0.5 with {\arrow{<}}}, postaction={decorate}](1,0) arc (0:180:0.5);
\draw[decoration={
    markings,
    mark=at position 0.5 with {\arrow{>}}}, postaction={decorate}](1,0) arc (0:180:0.25);
\draw[decoration={
    markings,
    mark=at position 0.5 with {\arrow{<}}}, postaction={decorate}](0.6,0) arc (0:180:0.05);
\draw[decoration={
    markings,
    mark=at position 0.5 with {\arrow{<}}}, postaction={decorate}, very thick](0.6,0) arc (0:180:0.3);
\end{tikzpicture}
\caption{Approximating a path in $\bH$ for a modular symbol by continued fractions}
\end{figure}

\subsection{Higher-weight modular symbols} 
In \cite{Shok}, Shokurov defined the modular symbols of weight greater than $2$ in the following way. From a pair $(G, w)$ where $G$ is a modular group and $w$ is a weight, one can construct a nonsingular projective variety $B_G^w$ over the complex numbers called a Kuga modular variety. \cite{Shok1} This variety is related to an elliptic surface $B_G$ over the modular curve $\overline{X_G}$. There is a natural projection from $B_G^w$ onto the modular curve $ \Phi^w : B_G^w \rightarrow \overline{X_G}$.
\\
\\
The Kuga modular variety is constructed using as a starting point the modular elliptic surface, which is an elliptic surface $B_G$ over the modular curve $\Phi: B_G \rightarrow \overline{X}_G$. It has the important property that the functional invariant is given by $J_G$, where $J_G$ is a meromorphic function on $\overline{X}_G$ given by the composition of the morphism 
\[ \overline{X}_G \rightarrow \overline{X}_{\SL_2(\bZ)}\]
induced by the subgroup structure $G \subset \SL_2(\bZ)$ with the absolute invariant function
\[ j: \overline{X}_{\SL_2(\bZ)} \rightarrow \bC\] 
extending the standard $j$-invariant. Such an elliptic modular surface is canonically defined in the case that $-1 \notin G$ by \cite{Shioda}. In the absence of this condition, a non-canonical construction with the desired property is given in \cite{Shok1}. 
\\
\\
The Kuga modular variety is obtained by taking the Kuga variety, which can be constructed from any non-singular projective surface over a modular curve, of the modular elliptic surface. We give a very brief sketch of this construction. For details, please see \cite{Shok1}. 
\\
\\
Let $\Delta'$ be the set of non-singular points of $\overline{X}_G$, and $\mathcal{U}'$ be its universal cover. There is an action of 
\[ \mathcal{G}^w = \pi_1(\Delta') \times \bZ^w \times \bZ^w \]
on $\mathcal{U}' \times \bC^w$ given by 
\[(\beta, n, m) (u, \xi) = (\beta u, f_{\beta}(u) (\xi + z(u)n + m) ) \]
where $z$ is a multivalued function on $\Delta'$ defined by 
\[ j(z(u)) = J_G(u)\]
and a choice of branch gives a function $z: \mathcal{U}' \rightarrow \bH$, and 
\[ f_{\beta}(u) = (cz(u) + d)^{-1}\]
where $c,d$ are given by the entries of the matrix $S(\beta)= \begin{pmatrix} a& b \\ c&d \end{pmatrix}$ where $S: \pi_1(\Delta') \rightarrow \SL_2(\bZ)$ is a certain representation of the fundamental group. 
\\
\\
We define 
\[ \overline{B}_G^w |_{\Delta'} = \mathcal{G}^w \backslash (\mathcal{U}' \times \bC^w). \]
By compactifying and resolving singularities, we then obtain a non-singular projective variety $B_G^w$, with a canonical projection $\Phi^w : B_G^w \rightarrow \overline{X}_G$. 
\\
\\
To define the modular symbol of weight $w+2$, we first define $\{\alpha,n,m\}_G$, a boundary modular symbol of weight $w+2$, by a mapping 
\begin{align*}
\{ , , \}_G : \tilde{\bQ} \times \bZ^w \times \bZ^w & \rightarrow H_0(\Pi, (R_1\Phi_*\bQ)^w) \\
(\alpha, n, m) & \mapsto \{\alpha,n,m\}_G 
\end{align*}
where $\tilde{\bQ} = \bQ \cup \{i\infty\}$, $\Pi$ are the cusps, and $(R_1\Phi_*\bQ)^w$ is a symmetric tensor power of the sheaf $R_1\Phi_*\bQ = \mathbb{G} \otimes_{\bQ} \bQ$, where $\mathbb{G}$ is the homological invariant of $B_G$.  
\begin{rem}
In general the sheaf $R_j\Phi_*^w\bQ$ is defined by taking the sheaf of local coefficients 
\[\cup_{v \in \Delta'} H_j(B_v^w, \bQ) \]
and extending it over $\overline{X_G}$. We will only need to use the case $R_1\Phi_{*1}\bQ = \mathbb{G} \otimes_{\bQ} \bQ$, which can be interpreted as a rational homological invariant. 
\end{rem}
This mapping is described carefully in Section 1.1 of \cite{Shok}, but we summarize the construction here. Let $\alpha \in \tilde{\bQ}$, and $n,m \in \bZ^w$. Let $p_0 \in \Pi$ be the cusp corresponding to $\alpha$. There is a decomposition 
\[H_0(\Pi, (R_1\Phi_*\bQ)^w)  = \bigoplus_{p \in \Pi} H_0(p, (R_1\Phi_*\bQ)^w).  \]
The modular symbol $\{\alpha,n,m\}$ is trivial on $H_0(p, (R_1\Phi_*\bQ)^w)$ when $p\neq p_0$, and so will be defined by an element in $H_0(p_0, (R_1\Phi_*\bQ)^w)$. Let $E \subset \overline{X}_G$ be a small disc around $p_0$. Let $U_{\alpha}$ be a neighborhood of $\alpha$ in $\bH' = \bH \setminus \SL_2(\bZ) \{ e^{\frac{2\pi i}{3}} \}$ that covers E, and let $\tilde{\Gamma}_{\alpha}: U_{\alpha} \rightarrow E$ be the covering. Choose a point $z_E \ \in U_{\alpha}$ and let $v_E = \tilde{\Gamma}_{\alpha}(z_E)$. View $z_E$ as a $0$-cell in a cellular decomposition of the disc $E$. Now we let $\{z_E,n,m\}_G^E \subset H_0(E, (R_1\Phi_{*}\bQ)^w)$ the cohomology class of the cycle 
\[ \sum_{j=1}^w (n_je_1 + m_je_2)v_E\]
where $\{e_1,e_2\}$ is a certain basis which we will not describe in detail here. There is a projective system of spaces $H_0(E, (R_1\Phi_*\bQ)^w))$ by morphisms 
\[ H_0(E', (R_1\Phi_*\bQ)^w)) \rightarrow H_0(E, (R_1\Phi_*\bQ)^w))\]
where $E' \subset E \subset \overline{X}_G$ are nested small discs. Finally, we set 
\[ \{\alpha, n,m\}_G = \lim_{\substack{\leftarrow\\E}} \{z_e, n,m\}_G^E.\]
It requires some argument to see that this definition makes sense, but we do not include it here as we will not need to work with this definition directly. 
\\
\\
The modular symbol,  $\{ \alpha, \beta, n, m \}_G $, is then defined (\cite{Shok} Lemma 1.2) via the unique mapping 
\begin{align*}
\tilde{Q}\times \tilde{Q} \times \bZ^w \times \bZ^w & \rightarrow H_1(\overline{X_G}, \Pi, (R_1\Phi_*\bQ)^w) \\
(\alpha, \beta, n, m) & \mapsto \{ \alpha, \beta, n, m \}_G
\end{align*}
such that 
\begin{enumerate}
\item $\partial \{\alpha, \beta, n, m \}_G = \{ \beta,n,m\}_G - \{\alpha, n, m\}_G $ where $\partial$ is the boundary mapping of the pair $(\overline{X_G}, \Pi)$. 
\item For any cusp forms $\Psi_1, \Psi_2 \in S_{w+2}(G)$ 
\[ \langle \{\alpha, \beta,n,m\}_G, (\Psi_1, \overline{\Psi_2}) \rangle = \int_{\alpha}^{\beta} \Psi_1 \Pi_{j=1}^w (n_jz + m_j)dz + \int_{\alpha}^{\beta}\overline{\Psi_2} \Pi_{j=1}^w (n_j\overline{z} + m_j) d\overline{z}\]
where $n = (n_1,...,n_w)$, $m=(m_1,...m_w)$ and $\langle,\rangle$ is the canonical pairing described in \cite{Shok3}:
\[ \langle,\rangle: H_1(\overline{X_G}, Y, (R_1\Phi_*\bQ)^w) \times S_{w+2}(G) \oplus \overline{S_{w+2}(G)} \rightarrow \bC\]
where $Y \subset \overline{X_G}$. 
\end{enumerate}
Importantly, the pairing $\langle,\rangle$ is non-degenerate on $H_1(\overline{X_G}, (R_1\Phi_*\bQ)^w) \times S_{w+2}(G) \oplus \overline{S_{w+2}(G)}$ \cite{Shok3}. 
\\
\\
The modular symbols of higher weight have a similar additivity property to the weight-$2$ case: 
 \[ \{ \alpha, \beta, n,m \}_G + \{\beta, \gamma, n,m\}_G = \{ \alpha, \gamma, n,m\}_G \]
 and they transform by elements
 $g = \begin{pmatrix} a & b \\ c & d \end{pmatrix} \in \GL_2^+({\bZ})$ as 
 \[ g | \{\alpha,\beta,n,m\}_G = \{ g(\alpha), g(\beta), g \cdot (n,m) \}_G = \{ g(\alpha), g(\beta), dn-cm, -bn+am\}_G. \]
 Note that this does not give an action directly on $H_1(\overline{X_G}, \Pi, (R_1\Phi_*\bQ)^w)$, but rather on representations of homology classes as modular symbols. For $g \in G$, we have 
 \begin{equation} \label{Ginv} 
 g |  \{\alpha,\beta,n,m\}_G = \{ g(\alpha), g(\beta), dn-cm, -bn+am\}_G = \{ \alpha, \beta, n, m \}_G. 
 \end{equation}
 Again, due to the additivity property, it is sufficient to consider modular symbols of the form, for $\alpha \in \bQ$
 \[ \{ 0, \alpha,n, m\}_G = - \sum_{k=1}^N \{g_k(0), g_k(i\infty), n,m \}_G. \]

\section{Limiting modular symbols}
Manin and Marcolli \cite{ManMar} introduced a generalization of the modular symbols to the whole boundary $\bP^1(\bR)$ by considering an infinite geodesic $\gamma_{\beta}$ in $\bH$ with one with one end at $\beta \in \bR \diagdown \bQ$ and the other end at $\alpha \in \bR$. Let $x_0 \in \bH$ be a fixed point on $\gamma_{\beta}$ and $y(\tau)$ a point along $\gamma_{\beta}$ with an arc length distance of $\tau$ away from $x_0$ towards $\beta$. The \textit{limiting modular symbol} \index{limiting modular symbol} is defined as the following limit, whenever it exists: 
 \begin{equation}\label{limitingmodularsymbol} 
  \{\{ * , \beta\}\}_G = \lim_{\tau \rightarrow \infty} \frac{1}{\tau} \{x_0, y(\tau) \}_G \in H_1(X_G, \bR) 
  \end{equation}
  where $\{x_0, y(\tau)\}_G$ is the homology class determined by the geodesic arc between $x_0$ and $y(\tau)$ in $\bH$. The limit is independent of the choice of $x_0$ and of $\gamma_{\beta}$ (\S 2 of \cite{ManMar}).
  
 \subsection{Shift map and the Lyapunov spectrum}
 To study the weight-$2$ limiting modular symbols, we consider a modular curve of the form $X_G = \textrm{PGL}_2(\bZ) \backslash (\bH \times \mathbb{P})$ where $\mathbb{P} = \textrm{PGL}_2(\bZ)/G$ and the associated shift map 
 \begin{equation}\label{Tshiftmap2}
 \begin{aligned}
  T: [0,1] \times \mathbb{P}& \rightarrow [0,1] \times \mathbb{P} \\
  (\beta, t) &\mapsto \left( \frac{1}{\beta} - \left[ \frac{1}{\beta} \right] , \begin{pmatrix}-[1/\beta] & 1 \\ 1 & 0 \end{pmatrix} t\right).
  \end{aligned}
  \end{equation}
  Defining a map $\phi: \mathbb{P} \rightarrow H_1(\overline{X_G}, \Pi, \bR)$ by 
  \[ \phi(s) = \{g(0), g(i\infty)\}_G\]
  where $g \in \PSL_2(\bZ)$ is a representative of the coset $s \in \mathbb{P}$, we see that $g_k$ acts on points $(\beta, t) \in [0,1]\times\mathbb{P}$ as the $k^{th}$ power of the shift operator $T$. Precisely, 
  \[ \phi(T^k(\beta,t)) = \{g_k(\beta)(0), g_k(\beta)(i\infty)\}_G = - \left\{ \frac{p_{k-1}(\beta)}{q_{k-1}(\beta)}, \frac{p_k(\beta)}{q_k(\beta)} \right\}_G \]
  where, as before, 
   \[ g_k(\beta) = \begin{pmatrix}p_{k-1}(\beta) & p_k(\beta) \\ q_{k-1}(\beta) & q_k(\beta) \end{pmatrix} \] 
   acts by Mobius transformations. 
   \newline
   \newline
It is shown in \cite{Mar} that the limiting modular symbol can be computed on certain level sets as a Birkhoff average. The level sets are given by the Lyapunov spectrum of the shift map on the unit interval 
\begin{equation} \label{Tshift}
\begin{aligned}
T: [0,1]& \rightarrow [0,1] \\
\beta &\mapsto \frac{1}{\beta} - \left[ \frac{1}{\beta} \right]. 
\end{aligned}
\end{equation}
Recall that the Lyapunov exponent \index{Lyapunov exponent} of a map $T: [0,1] \rightarrow [0,1]$ is given by the $T$-invariant function
\[\lambda(\beta) = \lim_{n \rightarrow \infty} \frac{1}{n} \log | (T^n)'(\beta)|. \]
In the particular case of $T$ defined by equation \ref{Tshift}, the Lyapunov exponent is 
\begin{equation} \label{Lyapunov}
\lambda(\beta) = 2 \lim_{n \rightarrow \infty} \frac{1}{n} \log q_n(\beta). 
\end{equation}
A theorem of L\'{e}vy \cite{Levy} shows that $\lambda(\beta) = \frac{\pi^2}{6\log2}$ for almost all $\beta$. We can decompose the unit interval into level sets of \ref{Lyapunov}, $L_c = \{ \beta \in [0,1] : \lambda(\beta) = c\}$
\[[0,1] = \cup_{c \in \bR} L_c \cup \{\beta \in [0,1] : \lambda(\beta) \textrm{ does not exist} \}. \]
Then, we have the following result about the limiting modular symbols. 
\begin{prop}[\cite{Mar} Theorem 2.1] 
For a fixed $c \in \bR$ and for $\beta \in L_c$, the limiting modular symbol \ref{limitingmodularsymbol} is computed by 
\begin{equation}\label{Marlimit}
\lim_{n \rightarrow \infty} \frac{1}{cn} \sum_{k=1}^n \phi \circ T^k(\beta, t_0) 
\end{equation}
where $T$ is the shift operator defined in \ref{Tshiftmap2} and $t_0$ is a base point. 
\end{prop}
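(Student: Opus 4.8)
The limiting modular symbol in \eqref{limitingmodularsymbol} is defined through a limit in the continuous arc-length parameter $\tau$, and the plan is to rewrite this limit in two moves: first, as a limit along the discrete continued-fraction approximation of $\beta$, and second, to absorb the normalization $1/\tau$ into $1/(cn)$ using the very definition of the level set $L_c$. Throughout I work with the modular curve in the form $X_G = \PGL_2(\bZ)\backslash(\bH\times\mathbb{P})$ and the map $\phi$ and shift $T$ as set up above.

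\textbf{Step 1 (continued-fraction decomposition of the truncated geodesic).} Since, by \S 2 of \cite{ManMar}, the limit \eqref{limitingmodularsymbol} does not depend on the base point $x_0$ nor on the companion endpoint $\alpha$ of $\gamma_\beta$, I may take $\gamma_\beta$ with rational endpoint $\alpha = 0$ and $x_0$ on it. As $\tau\to\infty$ the arc of $\gamma_\beta$ from $x_0$ to $y(\tau)$ descends successively into the horoball neighbourhoods of the approximants $p_k(\beta)/q_k(\beta)$; let $n(\tau)$ denote the index of the deepest one reached by arc length $\tau$. Using the additivity of modular symbols, the $G$-invariance \eqref{Ginv} applied to the matrices $g_k(\beta)\in\GL_2(\bZ)$, and the identification $\phi(T^k(\beta,t)) = -\{p_{k-1}(\beta)/q_{k-1}(\beta),\, p_k(\beta)/q_k(\beta)\}_G$ recorded just before the statement, one obtains, with the orientation fixed as in \eqref{limitingmodularsymbol},
\[ \{x_0, y(\tau)\}_G \;=\; \sum_{k=1}^{n(\tau)} \phi\circ T^k(\beta, t_0) \;+\; O(1), \]
where the $O(1)$ error comes from the two ``tail'' pieces of the arc, near $x_0$ and near $y(\tau)$, whose homology classes are uniformly bounded because the corresponding excursions have bounded length and because $\phi$ has finite image ($\mathbb{P} = \PGL_2(\bZ)/G$ is finite).

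\textbf{Step 2 (arc length versus $\log q_n$, and the level set).} A standard hyperbolic-geometry estimate --- the same one underlying formula \eqref{Lyapunov} --- shows that the arc length needed to reach the $n$-th horoball telescopes, up to a bounded error, to $2\log q_n(\beta)$; hence $\tau = 2\log q_{n(\tau)}(\beta) + O(1)$, so $n(\tau)\to\infty$ and $\tau/\bigl(2\log q_{n(\tau)}(\beta)\bigr)\to 1$ as $\tau\to\infty$. The hypothesis $\beta\in L_c$ means, by \eqref{Lyapunov}, exactly that $2\log q_n(\beta)/n \to c$; therefore $\tau/n(\tau)\to c$, and also $\log q_{n+1}(\beta)/\log q_n(\beta)\to 1$, so $\frac1\tau\{x_0,y(\tau)\}_G$ varies by $o(1)$ between consecutive jump times of $n(\tau)$ (here the boundedness of the individual terms $\phi\circ T^k$ is used again). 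Dividing the display of Step 1 by $\tau$, absorbing the $O(1)$ term, and replacing $\tau$ by $c\,n(\tau)$ at cost $o(1)$, gives
\[ \frac1\tau\{x_0, y(\tau)\}_G \;=\; \frac{1}{c\,n(\tau)}\sum_{k=1}^{n(\tau)} \phi\circ T^k(\beta, t_0) \;+\; o(1). \]
Letting $\tau\to\infty$, the left-hand side converges precisely when $\frac1{cn}\sum_{k=1}^n\phi\circ T^k(\beta,t_0)$ does, and to the same value, which is the assertion.

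I expect the main obstacle to be Step 1: making precise, with a genuinely uniform bound on the remainder, the coding dictionary that matches the excursion of the truncated geodesic into the $k$-th cusp neighbourhood with the single term $\phi\circ T^k(\beta,t_0)$ --- i.e.\ the statement that passage through the $k$-th Farey cell realizes the $k$-th power of the shift $T$ on homology. Once this dictionary and the elementary distance estimate of Step 2 are in place, the conversion of the normalization on the level set $L_c$ and the passage from the continuous parameter $\tau$ to the discrete index $n$ are routine.
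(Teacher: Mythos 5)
The paper does not actually prove this proposition: it is imported verbatim from Marcolli's Theorem 2.1, so there is no internal proof to compare against. Your argument reconstructs the standard proof of that result (continued-fraction decomposition of the truncated geodesic, plus the identification of arc length with $2\log q_n$ and the level-set normalization), which is also exactly the strategy the paper itself deploys later in the proof of Theorem \ref{mainthm}; the outline and the two reductions are the right ones.

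One justification, however, is wrong as stated, and it sits precisely at the point you flag as the main obstacle. The tail piece of the arc between the last crossing point $y_{n(\tau)}$ and $y(\tau)$ does \emph{not} have bounded hyperbolic length: its length is comparable to $t_{n(\tau)+1}-t_{n(\tau)}\asymp 2\log a_{n(\tau)+1}$, and the partial quotients $a_n$ are unbounded for generic $\beta\in L_c$. So ``bounded length'' cannot deliver the $O(1)$ in Step 1, and the same objection applies to the claim that $\frac1\tau\{x_0,y(\tau)\}_G$ varies by $o(1)$ between consecutive jump times of $n(\tau)$. The standard fix is to test against cusp forms through the perfect pairing: for $f\in S_2(G)$ the integral of $|f||dz|$ along a geodesic ray into a cusp converges (exponential decay of $f$ in the cusp), so $\bigl|\int_{y_n}^{y(\tau)}f\,dz\bigr|\le\int_{y_n}^{\xi_{n+1}}|f||dz|$ is bounded uniformly in $n$; this is exactly how the terms $\{y_{n+1},\xi_{n+1},N,M\}_G$ are discarded in the paper's proof of Theorem \ref{mainthm}. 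Alternatively, even the cruder bound $O(\log a_{n+1})$ suffices here, since on $L_c$ one has $\log q_{n+1}/\log q_n\to1$, hence $\log a_{n+1}/\log q_n\to0$ and the error is still $o(\tau)$. With that repair (and attention to the sign convention, since $\phi(T^k(\beta,t))=-\{p_{k-1}/q_{k-1},p_k/q_k\}_G$ in the paper's normalization), your argument goes through.
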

It is easy to check that the shift of the continued fraction expansion is measure-preserving with respect to the Gauss measure 
\[ d\mu = (\log2)^{-1} \frac {dx}{1+x}\]
and so the limiting modular symbol exists almost everywhere. However, it is also known that there is an exceptional set of measure $0$ and Hausdorff dimension $1$ where $\lambda(\beta)$ does not exist (\cite{PoWe} Theorem 3). On the exceptional set, the limiting modular symbol cannot be written as the limit \ref{Marlimit}. Finally, in the special case that $\beta$ is a quadratic irrationality (and hence has a periodic continued fraction expansion) it is shown (\cite{Mar} Lemma 2.2) that the limiting modular symbol is given by 
\[\{\{*, \beta\}\}_G = \frac{\sum_{k=1}^{n} \{g_k^{-1}(\beta) \cdot g(0), g_k^{-1}(\beta) \cdot g(i\infty)\}_G}{\lambda(\beta)n} \]
where $n$ is the period of the continued fraction expansion. In this case it is also known that the limit $\lambda(\beta)$ converges to a positive finite number, so that in particular the limiting modular symbol does not vanish. 

\subsection{Higher-weight limiting modular symbol} 

  To extend this picture to the higher weight setting, we now define $\phi: \mathbb{P} \times \bZ \times \bZ \rightarrow H_1(\overline{X_G}, \Pi, (R_1\Phi_*\bQ)^w)$ by 
  \[ \phi(s,n,m) = g | \{0,i \infty,n,m\}_G = \{ g(0), g(\infty), dn-cm, -bn+am\}_G\]
  where $g^{-1} = \begin{pmatrix} a & b \\ c & d \end{pmatrix} \in \PSL_2({\bZ})$ and $g$ is a representative of the coset $s \in \mathbb{P}$. The action of the shift operator on the higher-weight modular symbols is now described by the relation
   \begin{align}\label{nmaction}
    \phi(T^k(\beta,t),n,m) &=  \{g_k(0), g_k(i\infty), g_k^{-1} \cdot (n,m) \}_G \nonumber \\ 
    & = - \left\{ \frac{p_{k-1}(\beta)}{q_{k-1}(\beta)}, \frac{p_k(\beta)}{q_k(\beta)} ,  \begin{pmatrix}0 & -1 \\ -1 & -a_k \end{pmatrix} \dots \begin{pmatrix}0 & -1 \\ -1 & -a_1 \end{pmatrix} \begin{pmatrix}n \\ m \end{pmatrix}\right\}_G 
    \end{align}
where  $\beta = [ a_1, ..., a_N ]$ is the continued fraction expansion of $\beta$. Note that, again, this action is not on $H_1(\overline{X_G}, \Pi, (R_1\Phi_*\bQ)^w)$, but on representations of the homology classes as modular symbols.
\\
\\
Instead of proceeding with this setting, however, we will move to a related setting where we code each geodesic in the hyperbolic plane using cells of the Farey tessellation. It was introduced by Kessenb\'omer and Stratmann in \cite{KS} in order to obtain a more complete description of the standard modular symbols and their level set structure. 

 \subsection{Twisted continued fraction coding and shift space}
 Following \cite{KS} we define a code space related to the dynamical system given by the shift map in the previous section. We recall that an oriented geodesic in $\bH$ can be coded by a sequence of ``type changes''. Consider the Farey tessellation of $\bH$ formed by $\PSL_2(\bZ)$-translates of the triangle with vertices at $0$,$1$, and $i\infty$. As we travel along a geodesic in the positive direction, each tile is intersected in such a way that one vertex of the triangle is on one side, and two vertices of the triangle are on the other. If the single vertex is on the left, we say the visit to the tile is of type $L$, and if the single vertex is on the right, we say it is of type $R$. Let $l = (l_+,l_-)$ be the oriented geodesic with start point $l_+$ and end point $l_-$ and consider the set
\[ \mathcal{L} = \{ l = (l_-,l_+) | 0 < |l_+| \leq 1 \leq |l_-| , l_-l_+ < 0, \textrm{and }l_-,l_+ \in \bR \diagdown \bQ  \} .\]
Each $l \in \mathcal{L}$ is coded by the types of its visits
\begin{align*}
...L^{n_{-2}}R^{n_{-1}}y_lL^{n_1}R^{n_2}... & \textrm{ if } l_- \geq 1 \\
...R^{n_{-2}}L^{n_{-1}}y_lR^{n_1}L^{n_2}...& \textrm{ if } l_- \leq -1
\end{align*}
where $y_l$ is the point where $l$ intersects the imaginary axis. 

\begin{figure}[H]
\centering
\begin{tikzpicture}
\begin{scope}[scale=5]
\filldraw[fill=yellow, pattern=north west lines, pattern color= gray, draw=white] (1,0) arc (0:180:0.5)--(0,1)--(1,1)--cycle;
\draw(-1.4,0)--(1.2,0);
\draw(0,0)--(0,1);
\node[below]() at (0,0){$0$};
\draw(1,0)--(1,1);
\node[below]() at (1,0){$1$};
\draw(-1,0)--(-1,1);
\node[below]() at (-1,0){$-1$};
\draw[] (1,0) arc (0:180:0.5); 
\draw[] (0,0) arc (0:180:0.5); 
\draw[](1,0) arc (0:180:0.25);
\draw[](0.5,0) arc (0:180:0.25);
\draw[](0,0) arc (0:180:0.25);
\draw[](-0.5,0) arc (0:180:0.25);
\draw[](1/3,0) arc (0:180:1/6);
\draw[](1/2,0) arc (0:180:1/12);
\draw[](1/2+1/3+1/6,0) arc (0:180:1/6);
\draw[](1/2+1/6,0) arc (0:180:1/12);
\draw[](1/3-1,0) arc (0:180:1/6);
\draw[](1/2-1,0) arc (0:180:1/12);
\draw[](1/2+1/3+1/6-1,0) arc (0:180:1/6);
\draw[](1/2+1/6-1,0) arc (0:180:1/12);
\draw[](1/4,0) arc (0:180:1/8);
\draw[](1/3,0) arc (0:180:1/24);
\draw[](2/5,0) arc (0:180:1/30);
\draw[](1/2,0) arc (0:180:1/20);
\draw[](12/20,0) arc (0:180:1/20);
\draw[](2/3,0) arc (0:180:1/30);
\draw[](3/4,0) arc (0:180: 1/24);
\draw[](1,0) arc (0:180:1/8);
\begin{scope}[shift={(-1,0)}]
\draw[](1/4,0) arc (0:180:1/8);
\draw[](1/3,0) arc (0:180:1/24);
\draw[](2/5,0) arc (0:180:1/30);
\draw[](1/2,0) arc (0:180:1/20);
\draw[](12/20,0) arc (0:180:1/20);
\draw[](2/3,0) arc (0:180:1/30);
\draw[](3/4,0) arc (0:180: 1/24);
\draw[](1,0) arc (0:180:1/8);
\end{scope}
\draw[](-1,0) arc (0:70:1/2);
\draw[](-1,0) arc (0:90:1/4);
\draw[](-1,0) arc (0:180:1/6);
\draw[](-1,0) arc (0:180:1/8);
\draw[](-1-1/4,0) arc (0:180:1/24);
\draw[thick](0.8,0) arc (0:180:0.95);
\node[below]() at (0.8,0){$l_+$};
\node[below]() at (-1.1,0){$l_-$};
\node[above]() at (0,0.95){$y_l$};
\node[circle, draw, inner sep=1pt, fill=black]() at (0,0.935){};
\node[]() at (1/4,0.95){$L$};
\node[]() at (0.8,0.33){$R$};
\node[]() at (0.83,0.20){$R$};
\node[]() at (-1/2,0.97){$R$};
\node[]() at (-1.08,0.4){$L$};
\end{scope}
\end{tikzpicture}
\caption{Farey tesselation and coding of a geodesic} 
\end{figure}
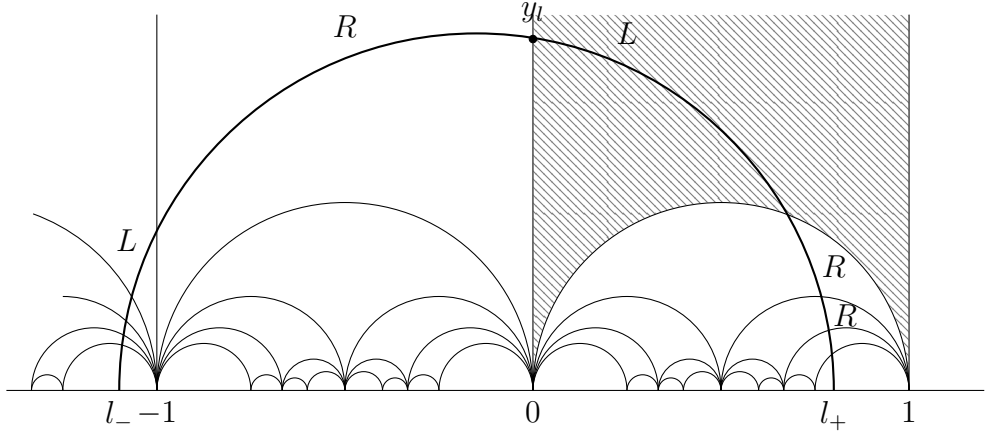 
This coding is related to the continued fraction expansion  of the endpoints $l_+$ and $l_-$ by 
\begin{align*}
l_- &= [n_{-1}, n_{-2}, ... ]^{-1} \textrm{ and }  l_+ = -[n_1,n_2,...] \textrm{ if } l_- \geq 1 ,\\ 
l_- &= -[n_{-1}, n_{-2}, ... ]^{-1} \textrm{ and }  l_+  = [n_1,n_2,...] \textrm{ if } l_- \leq -1.
\end{align*}
We now consider the generators $S$ and $T$ of $\PSL_2(\bZ)$ given by 
\[ S = \begin{pmatrix} 0 & -1 \\ 1 & 0 \end{pmatrix} \;\;\;\; \textrm{and} \;\;\;\; T = \begin{pmatrix} 1 & 1 \\ 0 & 1 \end{pmatrix} \]
which can also be thought of as their actions on $\bH$ as  $S: z \mapsto -1/z$ and $T: z \mapsto z+1$ and define the map $\tilde{\mathcal{P}} : \mathcal{L} \rightarrow \mathcal{L}$ 
\begin{align*}
\tilde{\mathcal{P}} (l) = \begin{cases} ST^{-n_1}(l) = (- [n_2, n_3,...]^{-1},[n_1, n_{-1},...])  & \textrm{if } l = ( [n_1, n_2,...]^{-1}, - [n_{-1}, n_{-2}, ...]) \\ST^{n_1}(l) = ( [n_2, n_3,...]^{-1},- [n_1, n_{-1},...],)  & \textrm{if } l = ( -[n_1, n_2,...]^{-1},  [n_{-1}, n_{-2}, ...]) \end{cases} .
\end{align*}
Let $\mathcal{P}$ be the restriction of $\tilde{\mathcal{P}}$ to the first coordinate. Then the map 
\begin{align*}
\mathcal{G} : [-1,1] &\rightarrow [-1,1] \\
x &\mapsto S \mathcal{P}S(x)
\end{align*}
is called the twisted Gauss map. It is related to the shift map $T$ by 
\[ \mathcal{G}(x) = -\textrm{sign}(x) T(|x|) .\] 
We define the shift space to be 
\[ \Sigma_* = \left\{ (x_1,x_2,...) \in (\bZ^{\times})^N | x_ix_{i+1} <0 \forall i \in \bN \right\} \]
with the shift map $\sigma_*(x_1,x_2,...) = (x_2,x_3,...)$. The map 
\begin{align*}
\rho: \Sigma_* &\rightarrow \mathcal{I} \\
(x_1,x_2,...) &\mapsto -\textrm{sign}(x_1) [ |x_1|, |x_2|, ... ] 
\end{align*}
where $\mathcal{I} = [-1,1] \cap (\bR \diagdown \bQ)$, is a bijection with the property $\rho \circ \sigma_* = \mathcal{G} \circ \rho$. 
\newline
\newline
We also wish to consider a generalization of this setup where $G$ is a modular subgroup of $\PSL_2(\bZ)$. Let $E_G$ be a set of fixed representative elements of the left cosets in $G \backslash \PSL_2(\bZ)$. We now consider the set of oriented geodesics given by 
\[ \mathcal{L}_G = \bigcup_{e \in E_G} e(\mathcal{L})\]
and the space 
\[ \overline{\Sigma}_G = \bigcup_{e \in E_G} e(\mathcal{I}) \times \{e\} \]
with the topology inherited from $\bR$. The $G$-twisted Gauss map is 
\begin{align*}
\mathcal{G}_G : \overline{\Sigma}_G &\rightarrow \overline{\Sigma}_G\\
(x,e) &\mapsto \left( eS\mathcal{P}Se^{-1}(x), e\right) 
\end{align*}
for $x \in e(\mathcal{I})$. It is shown in \cite{KS} that a certain proper shift space $\Sigma_G$ is isomorphic to $\overline{\Sigma}_G$. This shift space is
\[ \Sigma_G = \{ ((x_1,e_1),(x_2,e_2),...) \in (\bZ^{\times} \times E_G)^N | (x_1,x_2,...)\in \Sigma_*, \textrm{ and }e_{k+1}=\tau_{x_k}(e_k) \forall k \in \bN \} \]
where $\tau_{x_k}: E_G \rightarrow E_G$ is defined by 
\[ \tau_{x_k} (e_k) \equiv_G e_kST^{x_k}\] 
equipped with the shift map 
\begin{align*}
\sigma: \Sigma &\rightarrow \Sigma \\
((x_1,e_1),(x_2,e_2),...) &\mapsto ((x_2,e_2),(x_3,e_3),...) 
\end{align*}
and metric 
\[d(((x_k,e_k))_k, ((x'_k,e_k')_k)) = \sum_{i-=1}^{\infty}\frac{1}{2^i} \left( 1 - \delta_{(x_i,e_i),(x'_i,e'_i)}\right). \]
The isomorphism $\overline{\Sigma}_G \rightarrow \Sigma_G$ is given by 
\[ (e (\pm[n_1,n_2,...]), e) \mapsto ((\mp n_1,e),(\mp n_2, \tau_{\mp n_1}(e),(\mp n_3, \tau_{\mp n_2}(\tau_{\mp n_1}(e))),...). \]

Formulating the limiting modular symbols in terms of the shift space rather than directly in terms of points in $\bR$ is useful because the shift space $(\Sigma_G, \sigma)$ is known to be finitely irreducible (Prop 3.1 \cite{KS}). This means that there is a finite set $W \subset \Sigma_G^*$, where $\Sigma_G^*$ is the set of finite admissible words in the alphabet $\bZ^{\times} \times E_G$, such that for any $a,b \in \bZ^{\times} \times E_G$ there exists $w \in W$ such that $awb \in \Sigma_G^*$.

It is also shown in \cite{KS} that, as elements in $((x_i,e_i))_i \in \Sigma_G$ satisfy $e_{k+1}=\tau_{x_k}(e_k)$, there is a relation in terms of the continued fraction expansion of $x =-\textrm{sign}(x_1) [|x_1|, |x_2|, ... ] = [\tilde{x_1}, \tilde{x_2},...]$
\[ e_{k+1} \equiv_G e_1 ST^{\tilde{x_1}}...ST^{\tilde{x_k}} = e_1 \overline{g}_k(x) \]
where 
\begin{equation}\label{linegk}
\overline{g}_k(x) = \begin{pmatrix} -\textrm{sign}(x_1) p_{k-1}(|x|) & (-1)^kp_k(|x|) \\ q_{k-1}(|x|) & (-1)^{k+1} \textrm{sign}(x_1)q_k(|x|)\end{pmatrix}.
\end{equation}
Similar to equation \ref{nmaction}, we have the relation describing the action of $\overline{g}_k(x)$ on higher-weight modular symbols 
  \begin{align}\label{nmaction2}
& \overline{g}_k(x) | \{0, i\infty, n, m\}_G \nonumber \\
 &    = - \left\{ -\textrm{sign}(x_1)  \frac{p_{k-1}(|x|)}{q_{k-1}(|x|)}, -\textrm{sign}(x_1) \frac{p_k(|x|)}{q_k(|x|)} ,  \begin{pmatrix} 0& 1 \\ -1 & -|x_k| \end{pmatrix} \dots \begin{pmatrix}0& 1 \\ -1 & -|x_1| \end{pmatrix} \begin{pmatrix}n \\ m \end{pmatrix}\right\}_G . 
    \end{align}
\subsection{Limiting modular symbol for the shift space} 
We now define a corresponding modular symbol on the shift space. Let $\overline{X}_G = (\bH \cup P^1(\bQ) ) / G$. For an element of $\Sigma_G$, the associated limiting modular symbol on the shift space is
\begin{equation} \label{kugalimmod}
\begin{aligned}
\tilde{l}_G : \Sigma_G &\rightarrow H_1(\overline{X}_G, \bR) \\
((x_k, e_k))_k &\mapsto \lim_{t \rightarrow \infty} \frac{1}{t} \{i, e_1(x + i \exp(-t))\}_G 
\end{aligned}
\end{equation}
where we set $x = -\textrm{sign}(x_1) [|x_1|, |x_2|, ... ] \in \mathcal{I}$. 
The modular symbol on the right-hand side, $\{i, e_1(x + i \exp(-t))\}_G \in H_1(X_G, \bR)$, is the standard modular symbol. 

\begin{figure}
\begin{tikzpicture}[scale=1.3]
\draw (3,0) arc(0:143:1.67); 
\draw[white, thick] (3,0) arc(0:90:1.67); 
\draw (-5,0) -- (5,0); 
\node[below] at (3,0) {$x$}; 
\draw[] (3,0) -- (3,3); 
\draw(0,0) --(0,3); 
\node[right] at (3, 2.5) {\small{$\gamma(t) = x +  i \exp(-t))$}}; 
\node at (0, 1) [circle,fill,inner sep=1pt]{};

\node[below] at (2,0) {\small{$e_1(x)$}}; 
\draw[thick] (2,0) arc (0:180:2.5); 
\node[left] at (0,1) {$i$}; 
\draw[dashed] (2,0) arc(0:125:1.25); 
\draw (1.61, 1.34) arc(67.4:135:1.45); 
\node at (1.61, 1.34) [circle,fill,inner sep=1pt]{};
\node[right] at (1.61, 1.34) []{$e_1(\gamma(t_2))$};
\node at (1.37, 1.67) [circle,fill,inner sep=1pt]{};
\node[right] at (1.37, 1.67) []{$e_1(\gamma(t_1))$};
\node at (-2.5,2.3) {\small{$e_1(\gamma(t))$}}; 
\node[below] at (-3,0) {\small{$e_1(i \infty)$}};

\node at (3, 1) [circle,fill,inner sep=1pt]{};
\node[right] at (3, 1) []{$\gamma(t_2)$};
\node at (3, 1.4) [circle,fill,inner sep=1pt]{};
\node[right] at (3, 1.4) []{$\gamma(t_1)$};

\end{tikzpicture}
\caption{Definition of the limiting modular symbol for the shift space}
\end{figure}
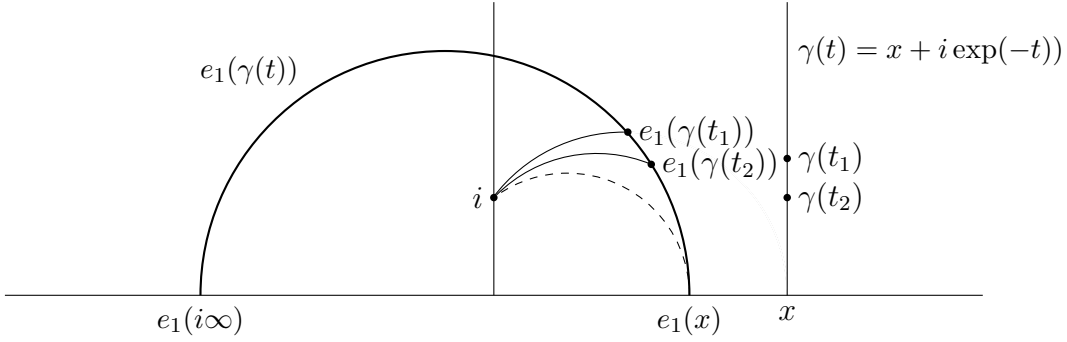

It is known that this limit can be equivalently written by approximating the point $e_1(x)$ by its continued fraction expansion (Proposition 4.2 \cite{KS})
\begin{equation} \label{prop4.2}
\tilde{l}_G(((x_k, e_k))_k) = \lim_{n \rightarrow \infty} \frac{1}{2 \log q_n(|x|)} \sum_{k=1}^{n} \{ e_k (i\infty), e_k(0) \}_G. 
\end{equation} 
We generalize this picture to the higher-weight setting by putting 
\begin{equation}
\begin{aligned}
\tilde{l}_{G,n,m} : \Sigma_G &\rightarrow H_1(\overline{X_G}, \Pi, (R_1\Phi_*\bQ)^w) \\
((x_k, e_k))_k &\mapsto \lim_{t \rightarrow \infty} \frac{1}{t} \{i, e_1(x + i \exp(-t)), n, m\}_G.
\end{aligned}
\end{equation}
We proceed by obtaining a similar result to equation \ref{prop4.2}, but now with modular symbols of higher weight. Importantly, the result holds everywhere on $\Sigma_G$. 
\begin{thm} \label{mainthm} For $((x_k,e_k))_k  \in \Sigma_G$ we have
\[ \tilde{l}_{G,N,M} \left( ((x_k,e_k))_k \right) = \lim_{n \rightarrow \infty} \frac{1}{2 \log q_n(|x|)} \sum_{k=1}^{n} \{ e_k (i\infty), e_k(0), \tilde{g}_{k-1}^{-1}(x) \cdot (N,M) \}_G \]
where $\tilde{g}_{k-1}(x) = e_1 \overline{g_{k-1}}(x) e_k^{-1}$ and we set $x = -\textrm{sign}(x_1) [|x_1|, |x_2|, ... ]$. 
\end{thm}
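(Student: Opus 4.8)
The plan is to imitate, for the higher--weight modular symbols, the proof Kesseb\"ohmer and Stratmann give for Proposition~4.2 of \cite{KS} (our \eqref{prop4.2}), carrying the coefficient datum $(N,M)$ through every step; the new feature is that this datum is transported by the $k$th convergent rather than staying fixed. The structural point is that, once one has passed to the coding space, the partial sums on the right of Theorem~\ref{mainthm} can be evaluated \emph{exactly}, so that the statement reduces to a comparison of a single modular symbol joining two cusps with the symbol $\{i,e_1(x+i\exp(-t)),N,M\}_G$ that defines $\tilde l_{G,N,M}$, under the matching $t\sim 2\log q_n(|x|)$.

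Concretely, put $h_k:=e_1\overline{g}_{k-1}(x)$. The relation $e_k\equiv_G e_1\overline{g}_{k-1}(x)$ established in \cite{KS} says exactly that $\tilde g_{k-1}(x)=h_k e_k^{-1}\in G$, and reading off the columns of \eqref{linegk} one checks the telescoping relations $h_k(0)=h_{k+1}(i\infty)$ for all $k$, with $h_1(i\infty)=e_1(i\infty)$ and $h_n(0)=e_1\!\big(-\sign(x_1)\,p_{n-1}(|x|)/q_{n-1}(|x|)\big)$. Applying the transformation law with $g=\tilde g_{k-1}(x)$ and then the $G$--invariance \eqref{Ginv} gives
\[
\{e_k(i\infty),e_k(0),\tilde g_{k-1}^{-1}(x)\cdot(N,M)\}_G=\tilde g_{k-1}(x)\,|\,\{e_k(i\infty),e_k(0),\tilde g_{k-1}^{-1}(x)\cdot(N,M)\}_G=\{h_k(i\infty),h_k(0),N,M\}_G,
\]
and, since the higher--weight symbols are additive in the pair of cusps with $(N,M)$ held fixed, summing over $k$ and telescoping yields the exact identity
\[
\sum_{k=1}^n\{e_k(i\infty),e_k(0),\tilde g_{k-1}^{-1}(x)\cdot(N,M)\}_G=\big\{e_1(i\infty),\;e_1\!\big(-\sign(x_1)\,p_{n-1}(|x|)/q_{n-1}(|x|)\big),\;N,M\big\}_G.
\]
Thus the right side of the theorem equals $\lim_{n}\tfrac{1}{2\log q_n(|x|)}\big\{e_1(i\infty),e_1(-\sign(x_1)p_{n-1}/q_{n-1}),N,M\big\}_G$, and it remains to match this with $\tilde l_{G,N,M}$. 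Splitting $\{i,e_1(x+i\exp(-t)),N,M\}_G$ via additivity into the fixed class $\{i,e_1(i\infty),N,M\}_G$ plus $\{e_1(i\infty),e_1(x+i\exp(-t)),N,M\}_G$, and choosing for each $n$ a value $t=t(n)\to\infty$ with $\exp(-t(n))$ comparable to the distance from $|x|$ to $p_{n-1}(|x|)/q_{n-1}(|x|)$, one reduces to showing (i) $t(n)/(2\log q_n(|x|))\to1$ and (ii) the normalised discrepancy $\tfrac{1}{2\log q_n(|x|)}\big\{e_1(-\sign(x_1)p_{n-1}/q_{n-1}),\,e_1(x+i\exp(-t(n))),\,N,M\big\}_G\to0$, \emph{uniformly} in $((x_k,e_k))_k\in\Sigma_G$. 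For (ii) one works in the shift space, using that $e_1(x+i\exp(-t(n)))$ lies at bounded hyperbolic depth in the cusp $e_1(-\sign(x_1)p_{n-1}/q_{n-1})$; the uniformity is supplied by the finite irreducibility of $(\Sigma_G,\sigma)$ (Proposition~3.1 of \cite{KS}), the finitely many interpolating words controlling the geometry near the $n$th convergent uniformly over the coding space. Dividing by $2\log q_n(|x|)$ and letting $n\to\infty$ then identifies the two limits (and, en route, shows $\tilde l_{G,N,M}$ is defined on all of $\Sigma_G$); this uniformity is precisely the source of the ``everywhere'' in the statement, in contrast with the almost--everywhere conclusion of the Birkhoff/Lyapunov approach \eqref{Marlimit}.

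I expect step (ii) to be the main obstacle. In the weight--$2$ case it is essentially automatic: the summands $\{e_k(i\infty),e_k(0)\}_G$ and the discrepancy symbol range over a fixed finite set of homology classes. Here the vectors $\tilde g_{k-1}^{-1}(x)\cdot(N,M)$ grow with $k$ (the entries of $\overline{g}_{k-1}(x)$ have size $q_{k-1}(|x|)$), so neither the individual summands nor the discrepancy are bounded a priori, and one must instead exploit the cancellation intrinsic to replacing the truncated geodesic by the continued--fraction polygonal path to show that the normalised discrepancy washes out, and that this estimate is uniform over $\Sigma_G$ --- for which the finite irreducibility of the coding is the essential input.
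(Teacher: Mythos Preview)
Your telescoping computation in the first half is exactly what the paper does: using $\tilde g_{k-1}\in G$ and \eqref{Ginv} to rewrite each summand as $\{\xi_k,\xi_{k+1},N,M\}_G$ with \emph{fixed} $(N,M)$, then collapsing the sum to a single symbol between $e_1(i\infty)$ and the $n$th convergent. The paper also introduces intermediate intersection points $y_n=\omega_n\cap e_1(l(x))$ on the vertical geodesic and uses the estimate $t_n\sim 2\log q_n(|x|)$ from \cite{KS2} to match normalisations, but the substance of step~(i) is the same as yours.

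The gap is in step~(ii). Finite irreducibility of $(\Sigma_G,\sigma)$ does not give what you need: it says any two letters can be joined by a word from a fixed finite set, but it does \emph{not} bound $|x_{n+1}|$, and hence does not control the ratio $\log|x_{n+1}|/\log q_n(|x|)$. This ratio governs the discrepancy between $t$ and $2\log q_{n_t}(|x|)$ for general $t$, and it can be arbitrarily large pointwise. The paper's resolution is entirely different: it pairs everything against an arbitrary $\Phi\in S_{w+2}(G)\oplus\overline{S_{w+2}(G)}$ (using the non-degeneracy of Shokurov's pairing), sets $\alpha_\Phi=\langle L_{G,N,M},\Phi\rangle$, and shows by a short contradiction argument that if $L_{G,N,M}$ exists with $\alpha_\Phi\neq 0$ then necessarily $\limsup_n \log|x_{n+1}|/\log q_n(|x|)=0$. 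The argument is \emph{conditional} on the existence of the right-hand limit, not uniform over $\Sigma_G$; your claimed uniformity is neither needed nor available.

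Relatedly, your worry about the growth of $\tilde g_{k-1}^{-1}(x)\cdot(N,M)$ is misplaced. Once you have applied $G$-invariance, the discrepancy term has the form $\{y_n,\xi_{n+1},N,M\}_G$ (plus the fixed $\{i,\xi_1,N,M\}_G$) with $(N,M)$ \emph{fixed}; its pairing with any cusp form is bounded uniformly in $n$ because cusp forms of weight $w+2$ decay fast enough at the cusps to absorb the polynomial factor $\prod_j(N_j z+M_j)$. That is what produces the ``$\mathrm{const.}/t$'' term in the paper's estimate. The genuine work in step~(ii) is the $\log|x_{n+1}|/\log q_n$ control, and that comes from the cusp-form pairing argument, not from combinatorics of the coding.
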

\begin{proof}
The proof follows the strategy outlined in \cite{KS}, but here we track the additional $(n,m)$-coordinate data of the higher-weight modular symbol. The general strategy is as follows. We begin by showing that 
\begin{equation}
L_{G,N,M} \left( ((x_k,e_k))_k \right)  := \lim_{n \rightarrow \infty} \frac{1}{2 \log q_n(|x|)} \sum_{k=1}^{n} \{ e_k (i\infty), e_k(0), \tilde{g}_{k-1}^{-1} (N,M)\}_G.
\end{equation}
exists if and only if there is a sequence $(t_n)_{n \in \bN}$ tending to infinity such that 
\begin{equation} \label{tlimit}
\lim_{n \rightarrow \infty} \frac{1}{t_n} \{i, e_1(x + i e^{-t_n}), N,M\}_G 
\end{equation} 
exists, and that if either limit exists they coincide. Then, we will show that the limit \ref{tlimit} does not depend on the particular sequence $(t_n)_{n \in \bN}$ chosen. 
\\
\\
The main idea is to write the geodesic passing through $e_1(i\infty)$ and $e_1(x)$, in terms of geodesics related to the continued fraction approximants of $x$. Define a sequence of points in $P^1(\bQ)$ by 
\begin{equation}
\begin{aligned}
\xi_1 & = e_1(i\infty) \\
\xi_n & =  e_1\left( -\textrm{sign}(x_1) \frac{p_{n-2}(|x|)}{q_{n-2}(|x|)}\right) \;\;\;\;\; n\geq 2
\end{aligned}
\end{equation}
and let $\omega_n$ be the oriented geodesic in $\bH \cup P^1(\bQ)$ which starts at $\xi_n$ and ends at $\xi_{n+1}$. 
\\
\\
Next, let $l(x)$ be the oriented vertical geodesic running from $i \infty$ to $x$  and let $e_1(l(x))$ be its image. The image $e_1(l(x))$ is a geodesic starting at $\xi_1$ and ending at $e_1(x)$. Define a sequence of points along $e_1(l(x))$ by 
\begin{equation}
\begin{aligned}
y_n = \omega_n \cup e_1(l(x)).
\end{aligned}
\end{equation}
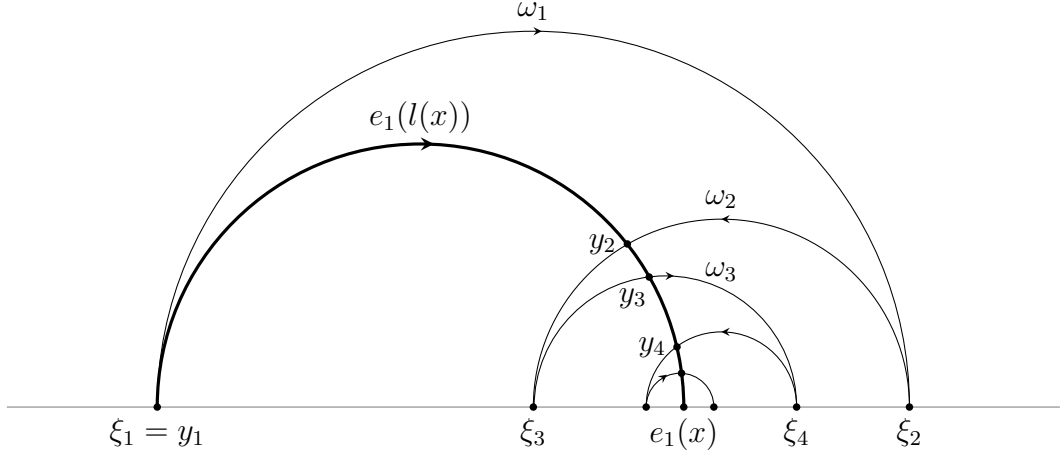
\begin{figure}[H]
\centering
\begin{tikzpicture}[scale=10,>=stealth]
\draw [color=gray](-0.2,0) -- (1.2,0);

\node at (0,0) [circle,fill,inner sep=1pt]{};
\node at (0,0) [below]{$\xi_1 = y_1$};
\node at (1,0) [circle,fill,inner sep=1pt]{};
\node at (1,0) [below]{$\xi_2$};
\node at (1/2,0) [circle,fill,inner sep=1pt]{};
\node at (1/2,0) [below]{$\xi_3$};
\node at (0.85,0) [circle,fill,inner sep=1pt]{};
\node at (0.85,0) [below]{$\xi_4$};
\node at (0.65,0) [circle,fill,inner sep=1pt]{};
\node at (0.74,0) [circle,fill,inner sep=1pt]{};
\node at (0.7,0) [circle,fill,inner sep=1pt]{};
\node at (0.7,0) [below]{$e_1(x)$};

\node at (0.697,0.045) [circle,fill,inner sep=1pt]{};

\node at (0.691,0.08) [circle,fill,inner sep=1pt]{};
\node at (0.691,0.08) [left]{$y_4$};
\node at (0.654,0.173) [circle,fill,inner sep=1pt]{};
\node at (0.665,0.173) [below left]{$y_3$};
\node at (0.625,0.217) [circle,fill,inner sep=1pt]{};
\node at (0.625,0.217) [left]{$y_2$};

\node at (0.35,0.35)[above]{$e_1(l(x))$};
\node at (0.5,0.5)[above]{$\omega_1$};
\node at (0.75,0.25)[above]{$\omega_2$};
\node at (0.75,0.18)[]{$\omega_3$};

\draw[decoration={
    markings,
    mark=at position 0.5 with {\arrow{<}}}, postaction={decorate}](1,0) arc (0:180:0.5);
\draw[decoration={
    markings,
    mark=at position 0.5 with {\arrow{>}}}, postaction={decorate}](1,0) arc (0:180:0.25);
\draw[decoration={
    markings,
    mark=at position 0.5 with {\arrow{<}}}, postaction={decorate}](0.85,0) arc (0:180:0.1745);
    \draw[decoration={
    markings,
    mark=at position 0.5 with {\arrow{>}}}, postaction={decorate}](0.85,0) arc (0:180:0.1);
    \draw[decoration={
    markings,
    mark=at position 0.7 with {\arrow{<}}}, postaction={decorate}](0.74,0) arc (0:180:0.045);
\draw[decoration={
    markings,
    mark=at position 0.5 with {\arrow{<}}}, postaction={decorate}, very thick](0.7,0) arc (0:180:0.35);
\end{tikzpicture}
\caption{Approximating $e_1(l(x))$ by continued fractions}
\end{figure}
Note that the oriented geodesic path from $y_n$ to $y_{n+1}$ is homologous to the geodesic path running from $y_n$ to $\xi_{n+1}$ to $y_{n+1}$. Therefore, we have that 
\[\{y_n, y_{n+1}, N,M\}_G = \{y_n, \xi_{n+1}, N,M\}_G  + \{\xi_{n+1}, y_{n+1}, N,M\}_G  \]
for all $n \in \bN$. 
\\
\\
Recall that we have 
\[ e_{n+1} \equiv_G e_1\overline{g}_n(x)\] 
where $\overline{g}_n$ for $n\geq2$ is defined in \ref{linegk} and $\overline{g_1} = \textrm{id}$. Therefore, there exists some $\tilde{g}_n(x) \in G$ such that 
\[ \tilde{g}_n(x) e_{n+1} = e_1\overline{g}_n(x). \]
By property \ref{Ginv} of the modular symbols, and by directly acting $g_n$ on the points $0$ and $i\infty$ by fractional linear transformations, we get
\begin{align*}
\{ e_n(i\infty), e_n(0), \tilde{g}_{n-1}^{-1}(x) \cdot (N,M) \}_G &= \{\tilde{g}_{n-1}(x) e_{n}(i\infty), \tilde{g}_{n-1}(x)e_n(0), N,M\}_G \\
&= \{ e_1\overline{g}_{n-1}(x)(i\infty), e_1 \tilde{g}_{n-1}(x)(0), N,M \}_G \\
& = \{ \xi_n, \xi_{n+1}, N,M \}_G. 
\end{align*}
Using homologous paths and additivity of the modular symbols we find 
\begin{align*}
& \{i, y_{n+1}, N,M\}_G  = \{i, y_2,N,M\}_G + \{ y_2, y_{n+1}, N,M\}_G \\
&= \{i, y_2,N,M\}_G + \sum_{k=2}^n \{y_k, y_{k+1},N,M\}_G \\
& = \{i, y_2,N,M\}_G + \sum_{k=2}^n \left( \{y_k, \xi_{k+1},N,M\}_G + \{\xi_{k+1}, y_{k+1},N,M\}_G \right)\\
& = \{i, y_2,N,M\}_G - \{ \xi_2, y_2, N,M\}_G - \{y_{n+1}, \xi_{n+1},N,M\}_G + \sum_{k=2}^{n+1} \{\xi_k, \xi_{k+1}, N,M\}_G\\
& = \{i, \xi_1,N,M\}_G - \{y_{n+1}, \xi_{n+1},N,M\}_G+ \sum_{k=1}^{n+1} \{\xi_k, \xi_{k+1}, N,M\}_G\\
& = \{i, \xi_1,N,M\}_G - \{y_{n+1}, \xi_{n+1},N,M\}_G+ \sum_{k=1}^{n+1} \{e_k(i\infty), e_k(0),  \tilde{g}_{k-1}^{-1} \cdot  (N,M) \}_G
\end{align*}
Let the sequence $t_n$ be defined by the equation 
\[e_1(x + i e^{-t_n}) := y_n.\] 
An argument involving hyperbolic geometry gives an estimate $e^{-t_n} \sim (q_n(|x|))^2$, for sufficiently large $n$ (see Lemma 3.3 of \cite{KS2}). 
\\
\\ 
With this we can complete the first part of the proof, concluding the equivalence of the limits: 
\begin{align*}
L_{G,N,M} \left( ((x_k, e_k))_k \right) & = \lim_{n \rightarrow \infty} \frac{1}{2 \log q_n(|x|)} \sum_{k=1}^{\infty} \{ e_k (i\infty),  e_k(0), \tilde{g}_{k-1}^{-1} \cdot  (N,M) \}_G \\
& = \lim_{n \rightarrow \infty} \frac{1}{t_n} ( \{ i,y_{n+1},N,M\}_G  + \{y_{n+1}, \xi_{n+1},N,M\}_G - \{i, \xi_1, N,M\}_G ) \\
& = \lim_{n \rightarrow \infty} \frac{1}{t_n} ( \{ i,y_{n},N,M\}_G  + \{y_{n}, \xi_{n+1},N,M\}_G - \{i, \xi_1, N,M\}_G ) \\
& = \lim_{n \rightarrow \infty} \frac{1}{t_n}  \{ i,y_{n},N,M\}_G \\
& = \lim_{n \rightarrow \infty} \frac{1}{t_n}  \{i, e_1(x + i e^{-t_n}), N, M\}_G
\end{align*}
The second step of the proof is to show that the limit $\lim_{n \rightarrow \infty} \frac{1}{t_n}  \{i, e_1(x + i e^{-t_n}), N, M\}_G$ is independent of the choice of sequence $(t_n)$ tending to infinity. Recall that we have a non-degenerate pairing between the higher-weight modular symbols and the space of cusp forms $S_{w+2}(G) \oplus \overline{S_{w+2}(G) }$. Suppose that $L_{G,N,M} \left( ((x_k,e_k))_k \right)$ exists. For $\Phi = (\Phi_1, \Phi_2) \in  S_{w+2}(G) \oplus \overline{S_{w+2}(G) }$ arbitrary and $t>0$, let 
\[ \alpha_{\Phi} = \langle L_{G,N,M} \left( ((x_k,e_k))_k \right) , \Phi \rangle \]
and let 
\[n_t = \sup \{ n \in \bN : 2 \log q_n(|x|) \leq t \} .\]
Our aim is to show that for all $\Phi \in S_{w+2}(G) \oplus \overline{S_{w+2}}(G)$, 
\[ \limsup_{t \rightarrow \infty} \left| \frac{\langle \{i, e_1(x + i e^{-t}),N,M\}_G, \Phi \rangle}{t} - \frac{\langle \sum_{k=1}^{n_t} \{ e_k(i\infty), e_k(0), \tilde{g}_k^{-1} (N,M)\}_G, \Phi \rangle}{2\log q_{n_t}(|x|)}\right| =0,  \]
which will allow us to conclude that $\tilde{l}_{G,N,M}\left( ((x_k,e_k))_k \right)$ exists and is equal to $L_{G,N,M} \left( ((x_k,e_k))_k \right)$. We obtain a bound following exactly the same strategy as \cite{KS}, but we repeat it here for completeness. 
\begin{align*}
& \limsup_{t \rightarrow \infty}  \left| \frac{\langle \{i, e_1(x + i e^{-t}),N,M\}_G, \Phi \rangle}{t} - \frac{\left\langle \displaystyle\sum_{k=1}^{n_t} \{ e_k(i\infty), e_k(0), \tilde{g}_k^{-1} (N,M)\}_G, \Phi \right\rangle}{2\log q_{n_t}(|x|)}\right| \\
& = \limsup_{t \rightarrow \infty} \left| \frac{{2\log q_{n_t}(|x|)}\langle \{i, e_1(x + i e^{-t}),N,M\}_G, \Phi \rangle}{{2t\log q_{n_t}(|x|)}} - \frac{t \left\langle \displaystyle\sum_{k=1}^{n_t} \{ e_k(i\infty), e_k(0), \tilde{g}_k^{-1} (N,M)\}_G, \Phi \right\rangle}{2t\log q_{n_t}(|x|)}\right| \\
& \leq \limsup_{t \rightarrow \infty} \left| \frac{1}{t} \langle \{i, e_1(x+ie^{-t}), N, M\}_G - \displaystyle\sum_{k=1}^{n_t} \{ e_k(i\infty), e_k(0), \tilde{g}_k^{-1} (N,M)\}_G , \Phi \rangle \right| \\
& \hspace{3cm} + \limsup_{t \rightarrow \infty} \left| \frac{2\log q_{n_t}(|x|) - t}{t} \right| \left| \frac{ \left\langle \displaystyle\sum_{k=1}^{n_t} \{ e_k(i\infty), e_k(0), \tilde{g}_k^{-1} (N,M)\}_G, \Phi \right\rangle}{2\log q_{n_t}(|x|)}\right| \\
&\leq \limsup_{t \rightarrow \infty} \frac{const.}{t} + \limsup_{n \rightarrow \infty} \frac{\log |x_{n+1}|}{\log q_n(|x|)} |\alpha_{\Phi}| \\
& =  |\alpha_{\Phi}| \limsup_{n \rightarrow \infty} \frac{\log |x_{n+1}|}{\log q_n(|x|)} 
\end{align*}
where in the last bound we are using the recursion relation $q_n(|x|)  = |x_{n+1}|q_{n-1}(|x|) + q_{n-2}(|x|)$. 
\\
\\
In the case that $\alpha_{\Phi}=0$ for all $\Phi \in  S_{w+2}(G) \oplus \overline{S_{w+2}(G) }$, the result follows immediately. We will assume wlog that there is some $\Phi$ such that $\alpha_{\Phi} > 0$. In this case, we will show that $\limsup_{n \rightarrow \infty}  \frac{\log |x_{n+1}|}{\log q_n(|x|)}=0.$ To do this, we first observe that 
\begin{align*}
\alpha_{\Phi} &= \lim_{n \rightarrow \infty} \frac{\left\langle \sum\limits_{k=1}^{n+1} \{e_k(i\infty),e_k(0),\tilde{g}_k^{-1} (N,M) \}_G, \Phi \right\rangle}{2\log q_{n+1}(|x|)} \\
&= \lim_{n \rightarrow \infty} \frac{\left\langle \sum\limits_{k=1}^{n} \{e_k(i\infty),e_k(0),\tilde{g}_k^{-1} (N,M) \}_G + \{e_{n+1}(i\infty),e_{n+1}(0),\tilde{g}_{n+1}^{-1} (N,M) \}_G, \Phi \right\rangle}{2\log q_{n}(|x|)+ 2\log |x_{n+1}| }\\
&= \lim_{n \rightarrow \infty} \frac{\left\langle \sum\limits_{k=1}^n \{e_k(i\infty, e_k(0), \tilde{g}_k^{-1} (N, M) \}_G, \Phi \right\rangle \left( 1 + \frac{\langle\{e_{n+1}(i\infty),e_{n+1}(0),\tilde{g}_{n+1}^{-1} (N,M) \}_G, \Phi \rangle}{\left\langle \sum\limits_{k=1}^n \{e_k(i\infty, e_k(0), \tilde{g}_k^{-1} (N, M) \}_G, \Phi \right\rangle}\right) }{2\log q_n(|x|)\left( 1 + \frac{\log|x_{n+1}|}{\log q_n(|x|)} \right)} \\
& = \alpha_{\Phi}  \lim_{n \rightarrow \infty} \frac{ 1 + \frac{\langle \{e_{n+1}(i\infty),e_{n+1}(0),\tilde{g}_{n+1}^{-1} (N,M) \}_G, \Phi \rangle}{\left\langle \sum\limits_{k=1}^n \{e_k(i\infty), e_k(0), \tilde{g}_k^{-1} (N, M) \}_G, \Phi \right\rangle} }{ 1 + \frac{\log|x_{n+1}|}{\log q_n(|x|)} } .
\end{align*}
Suppose for contradiction that $\limsup_{n \rightarrow \infty} \frac{\log|x_{n+1}|}{\log q_n(|x|)}>0$. Then there is a subsequence $(n_k)_k$ such that $\lim_{k \rightarrow \infty} \frac{\log|x_{n_k+1}|}{\log q_{n_k}(|x|)} > 0$, and hence $\lim_{k \rightarrow \infty} |x_{n_k +1}| = \infty$. Since we have assumed that $\alpha_{\Phi}>0$, we get
\begin{align*}
1 &= \lim_{k \rightarrow \infty} \frac{ 1 + \frac{\langle \{e_{n_k+1}(i\infty),e_{n_k+1}(0),\tilde{g}_{n_k+1}^{-1} (N,M) \}_G, \Phi \rangle}{\left\langle \sum\limits_{j=1}^{n_k} \{e_j(i\infty), e_j(0), \tilde{g}_j^{-1} (N, M) \}_G, \Phi \right\rangle} }{ 1 + \frac{\log|x_{n_k+1}|}{\log q_{n_k}(|x|)} }  \\
&=  \lim_{k \rightarrow \infty} \frac{\log q_{n_k}(|x|)}{\left\langle \sum\limits_{j=1}^{n_k} \{e_j(i\infty), e_j(0), \tilde{g}_j^{-1} (N, M) \}_G, \Phi \right\rangle} \frac{\langle \{e_{n_k+1}(i\infty),e_{n_k+1}(0),\tilde{g}_{n_k+1}^{-1} (N,M) \}_G, \Phi \rangle}{\log|x_{n_k+1}|} \\
& = \frac{1}{\alpha_{\Phi}} (0) = 0
\end{align*}
This is a contradiction, so we conclude that $\limsup_{n \rightarrow \infty} \frac{\log|x_{n+1}|}{\log q_n(|x|)}=0$. 
\end{proof}

As in the weight-$2$ case, one can conclude from Theorem \ref{mainthm} that the limiting modular symbol is non-zero at the quadratic irrationalities, which have repeating continued fraction expansion. Having defined the higher-weight limiting modular symbol, the next step in this research program is to modify the boundary-$\GL_2$ system of \cite{MarPan} to the higher-weight setting, obtain the ground states, and study their evaluations on the arithmetic subalgebra. The author intends to update this note with further results in this direction. 

\subsection*{Acknowledgments} The author would like to thank Matilde Marcolli for helpful discussions.

\end{document}